\newtheorem{thm}{Theorem}[section]
\newtheorem{cor}[thm]{Corollary}
\newtheorem{lem}[thm]{Lemma}
\newtheorem{prop}[thm]{Proposition}
\theoremstyle{definition}
\newtheorem{defn}[thm]{Definition}
\theoremstyle{remark}
\numberwithin{equation}{section}
\renewcommand{\>}{\rangle}
\begin{document}

\author{L. Ciobanu and A. Ould Houcine}
\title{The monomorphism problem in free groups}
\date{}
\maketitle

\begin{abstract}
Let $F$ be a free group of finite rank. We say that the monomorphism problem in $F$ is decidable if for any two elements $u$ and $v$ in $F$, there is an algorithm that determines whether there exists a monomorphism of $F$ that sends $u$ to $v$. In this paper we show that the monomorphism problem is decidable and we provide an effective algorithm that solves the problem. 
\medskip

\noindent 2000 Mathematics Subject Classification: 20E05, 68Q25.

\noindent Key words: free groups, decision problems, complexity of algorithms.

\end{abstract}

\section{Introduction}

Let $F$ be a free group of finite rank. Given two elements $u$ and $v$ in $F$, one can formulate the following natural decision problems. Is there an algorithm that determines whether there is a homomorphism $\phi : F \rightarrow F$ such that $\phi(u)=v$? If we require $\phi$ to be an automorphism, we call the decision problem the automorphism problem in free groups, and similarly define the endomorphism and the monomorphism problems. In the case of the automorphism problem, the question has been answered positively by Whitehead in 1936 \cite{Whitehead36} and his algorithm is one of the most important and useful tools when computing in free groups and beyond. Answering the endomorphism problem is equivalent to solving an equation in free groups in which the variables and constants appear on different sides of the equality. Thus the solvability of the endomorphism problem in free groups is a consequence of Makanin's algorithm \cite{M}.   

Answering the monomorphism problem is equivalent to deciding if the following infinite system
$$
u(X_1, \dots, X_n)=v(x_1, \dots, x_n) \wedge  \bigwedge_{w \in W}w(X_1, \dots, X_n) \neq 1, \leqno (1)
$$
has a solution in $F=\<x_1, \dots, x_n\>$, where $W$ is the set of nontrivial reduced words in $F$. The existence of a solution of the infinite system in (1) can be expressed as an existential  sentence  in the language $\mathcal  L_{\omega_1\omega}$, where $\mathcal L$ denotes the usual language of groups and $\mathcal  L_{\omega_1\omega}$ is built, roughly speaking, from $\mathcal L$ by allowing  infinite conjunctions and disjunctions of formulas. Thus the solvability of the monomorphism problem in free groups can be seen as the  problem of deciding the truth of a subset of the set  of existential sentences of $\mathcal  L_{\omega_1\omega}$ in $F$.  It is worth pointing out that for $n \geq 3$  the infinite system in (1) is not equivalent to a finite subsystem and thus the problem cannot be reduced to the decidability of the existential theory of $F$.

In this paper we give a positive answer to the monomorphism problem in free groups. We   provide an algorithm that is polynomial in the lengths of $u$ and $v$,  except for parts that involve the Whitehead algorithm, which is conjectured to be polynomial, but this has not yet been proven. 
\section{Preliminaries}

Let $F_n$ be a free group of rank $n\geq 2$ with generating set $A=\{ x_1,\ldots , x_n \}$, viewed as the
fundamental group of the wedge of $n$ circles. This naturally leads to working with graphs. All graphs considered here
are going to be oriented and finite (unless otherwise stated).

Let $H$ be a finitely generated subgroup of rank $m$ of the free group $F_n$, and let $X_H$ be the corresponding
covering space of the wedge of $n$ circles (infinite except when $H$ has finite index in $F_n$). That is, vertices of
$X_H$ are cosets, $V(X_H)=\{ Hx \mid x\in F_n \}$, and edges are of the form $(Hx,a)$ going from $Hx$ to $Hxa$, for all
$x\in F_n$ and $a\in A$. Note that $X_H$ is an $A$-labeled oriented graph, with a distinguished basepoint $*=H1$, and
with every vertex being the initial vertex (and the terminal vertex as well) of exactly $n$ edges, labeled by the $n$
symbols in $A$ (see~\cite{kap-mya-stallingsfoldings} for more details).

The {\it core} of $H$, denoted $C_H$, is the smallest subgraph of $X_H$ containing the basepoint $*$, and having
fundamental group $H$. So all vertices in $C_H$ have degree at least two except possibly $*$ and, since $H$ is
finitely generated, $C_H$ is a finite graph. Like $X_H$, the graph $C_H$ is an $A$-labeled oriented graph, with every
vertex being the initial vertex (and the terminal vertex) of at most $n$ edges, labeled by pairwise different letters
in $A$.

We will later make use of some particular type of graphs, which we call topological graphs.

\begin{defn}
A {\it topological graph} of rank $g$ is a finite graph with a distinguished vertex $*$ in which all vertices have degree at least $3$  except possibly $*$,  and whose fundamental group is the free group $F_g$ of rank $g$. Let $Top(g)$ be the set of topological graphs of rank $g$.  

\end{defn}

It is worth pointing out that, if $H$ is a finitely generated subgroup of $F_n$, then one can associate to $H$ a topological graph obtained from $C_H$ by deleting all vertices of $C_H$ of degree 2 (except $*$). The next lemma provides some basic information about the set $Top(g)$.

\begin{lem} \label{topgraph}
The set $Top(g)$ is finite, and the number of edges in a topological graph of rank $g$ is at most $3g-1$.
\end{lem}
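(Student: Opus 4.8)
The plan is to derive the edge bound from two standard facts about finite connected graphs and then read off finiteness as an immediate consequence. The two facts are the Euler-characteristic identity, which says that a connected graph $\Gamma$ with $V$ vertices and $E$ edges has fundamental group free of rank $g = E - V + 1$, and the handshake lemma $\sum_{w} \deg(w) = 2E$, where a loop is counted twice in the degree of its endpoint. A topological graph of rank $g \geq 1$ is connected (this is part of the intended meaning of the definition, since the fundamental group based at $*$ is required to be $F_g$), so both facts apply; from the first I would record the relation $V = E - g + 1$.

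For the degree count I would single out the basepoint. Every vertex other than $*$ has degree at least $3$, and since the graph is connected with $g \geq 1$ the basepoint satisfies $\deg(*) \geq 1$ (in the extreme case of a single vertex carrying $g$ loops one even has $\deg(*) = 2g$). Summing degrees then gives
\[
2E = \deg(*) + \sum_{w \neq *} \deg(w) \geq \deg(*) + 3(V-1).
\]
Substituting $V = E - g + 1$ and simplifying yields $2E \geq \deg(*) + 3E - 3g$, that is $E \leq 3g - \deg(*) \leq 3g - 1$, which is exactly the asserted bound on the number of edges.

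Finally, the vertex count follows at once: $V = E - g + 1 \leq 2g$. Since $Top(g)$ is to be understood up to isomorphism, and there are only finitely many isomorphism types of finite graphs with at most $2g$ vertices and at most $3g-1$ edges (the bound on the total number of edges rules out arbitrarily many parallel edges or loops), with only finitely many choices of a distinguished basepoint among the vertices of each, the set $Top(g)$ is finite. The only delicate point is the bookkeeping around the basepoint: one must confirm $\deg(*) \geq 1$ so that the estimate retains the crucial $-1$ (this is where the degenerate case $g = 0$ would have to be excluded), and one must note that it is precisely the degree-$\geq 3$ hypothesis that prevents the graph from subdividing into arbitrarily many degree-$2$ vertices and thereby keeps $V$ bounded.
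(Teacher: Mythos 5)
Your proof is correct and follows essentially the same route as the paper: the Euler characteristic identity combined with the handshake lemma and the degree bounds (at least $3$ off the basepoint, at least $1$ at the basepoint, using connectivity and $g\geq 1$) yields $|E(\Gamma)|\leq 3g-1$ and $|V(\Gamma)|\leq 2g$, whence finiteness. The only difference is cosmetic: the paper first bounds the number of vertices by $2g$ and then deduces the edge bound, while you bound the edges first; your explicit remarks on $\deg(*)\geq 1$ and on excluding $g=0$ are sound points the paper leaves implicit.
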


\begin{proof}
Let $\Gamma$ be a topological graph of rank $g$. The Euler characteristic formula gives $|E(\Gamma)|-|V(\Gamma)|+1=g$, where $|E(\Gamma)|$ and $|V(\Gamma)|$ are the number of edges and vertices of $\Gamma$, respectively. This can be rewritten as $$\frac{\sum_{v \in V(\Gamma)}\deg(v)}{2} - |V(\Gamma)|+1=\frac{\sum_{v \in V(\Gamma)}(\deg(v)-2)}{2} +1=g.$$ 
Set $V(\Gamma)^*=V(\Gamma)\setminus \{*\}$.  Then  
$$
2(g-1)=\sum_{v \in V(\Gamma)^*}(\deg(v)-2)+(\deg(*)-2),
$$ so $\Gamma$ has at most $2g$ vertices since $\deg(v)-2\geq 1$ for all $v \in V(\Gamma)^*$ and $\deg(*)-2 \geq -1$. A consequence of this is that $|E(\Gamma)|=g+|V(\Gamma)|-1 \leq g+2g -1=3g-1.$

Since $Top(g)$ is a set of graphs with a bounded number of vertices and edges, this set is  finite.  
\end{proof}


In our main result we will need the concept of a Nielsen-reduced set, which is defined as follows. Let  $|u|$ denote the length of a word $u$ in $F$ with respect to the basis $A$. A subset $U \subseteq  F \setminus \{1\}$ is called Nielsen-reduced if for any $v_1, v_2, v_3 \in U^{\pm 1}$ the following conditions hold:
\begin{enumerate}
\item $v_1 v_2 \neq 1$ implies $|v_1 v_2| \geq |v_1|, |v_2|$,
\item $v_1 v_2 \neq 1$ and $v_2 v_3 \neq 1$ implies $|v_1 v_2 v_3| > |v_1|-|v_2|+|v_3|$. 
\end{enumerate}

Such a set has some very desirable properties. Let $H$ be a subgroup of $F$ generated by a Nielsen-reduced set $U$. Then
\begin{enumerate}
\item $H$ is free with $U$ as a basis \cite[Proposition 2.5, Ch I]{LyndonSchupp77},
\item if $w \in H$ has the form $w=u_1 u_2 \dots u_t$ where each $u_i\in U^{\pm 1}$, $u_i u_{i+1} \neq 1$ and $t \geq 1$, then $|w|\geq t$ and $|w|\geq |u_i|$ for any $1\leq i \leq t$ \cite[Proposition 2.13, Ch I]{LyndonSchupp77}. 
\end{enumerate}

It is well-known that any subgroup of $F$ has a Neilsen-reduced basis. We can conclude the previous two remarks with the following corollary.

\begin{cor}  \label{cor} Let $F$ be a free group of rank $n \geq 2$. Then any finitely generated subgroup $H$ of $F$ has a basis $B=\{b_1, \dots, b_m\}$ such that for any reduced nontrivial word $w$ on $B$  one has $|w| \geq |b|$ for any $b \in B$ which appears in the reduced form of $w$.  \qed 
\end{cor}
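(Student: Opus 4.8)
The plan is to take for $B$ a Nielsen-reduced basis of $H$. Its existence is exactly the well-known fact recalled immediately before the statement: every subgroup of $F$ admits a Nielsen-reduced generating set, and by the first desirable property listed above such a set is automatically a basis. So I would fix a Nielsen-reduced set $U=\{b_1,\dots,b_m\}$ generating $H$; by property (1) it is a free basis of $H$, and this is the $B$ we want. Note that throughout, $|\cdot|$ is measured in the fixed ambient basis $A$ of $F$, which is the setting in which the Nielsen conditions were stated.

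The next step is to match the two notions of ``reduced word'' that are in play. By definition, a reduced nontrivial word $w$ on $B$ is an expression $w=u_1u_2\cdots u_t$ with each $u_i\in B^{\pm 1}$, $t\geq 1$, and no immediate cancellation, i.e.\ $u_iu_{i+1}\neq 1$ for all $i$. This is precisely the hypothesis required to invoke the second desirable property of a Nielsen-reduced set, which then yields $|w|\geq |u_i|$ for every $1\leq i\leq t$ (I only need this half of property (2), not the bound $|w|\geq t$).

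It remains to translate the conclusion back into the language of the statement. The basis elements $b\in B$ that appear in the reduced form of $w$ are exactly the syllables $u_i$, up to inversion; since $|b|=|b^{-1}|$ in $F$, we have $|u_i|=|b|$ whenever $u_i=b^{\pm 1}$. Combined with $|w|\geq |u_i|$, this gives $|w|\geq |b|$ for every such $b$, which is the desired inequality.

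Since each step is a direct application of the facts recalled just above the corollary, I do not expect a genuine obstacle here. The only points that need a little care are verifying that the combinatorial condition $u_iu_{i+1}\neq 1$ defining a reduced word on $B$ coincides with the hypothesis of the cited proposition, and confirming that all lengths are measured consistently in the same basis $A$ of $F$.
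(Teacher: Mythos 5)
Your proof is correct and is exactly the argument the paper intends: the corollary is stated with an immediate \qed precisely because it follows from the two properties of Nielsen-reduced sets recalled just above it, which is what you invoke. Your extra care about matching the notion of reduced word on $B$ with the hypothesis $u_iu_{i+1}\neq 1$ and noting $|b|=|b^{-1}|$ only makes explicit what the paper leaves implicit.
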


\smallskip
The previous corollary states that, in some sense, $H$ has a basis consisting of ``minimal"  elements, and an example of such a basis could be a Nielsen-reduced one. Under some natural conditions, there is also a generalisation of it to the context of valuated groups \cite[Lemma 4.2]{ould}. 

\smallskip
We will also  need the following  lemma. 

\begin{lem}\label{lem1}
Let $F$ be a free group of rank $n \geq 2$. If $H$ is a subgroup of rank $m<n$, then  there are elements $c_1, \dots, c_{n-m}$ such that the subgroup generated by $H$ and $\{c_1, \dots, c_{n-m}\}$ is  of rank $n$. 
\end{lem}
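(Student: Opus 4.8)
The plan is to extend a basis of $H$ by $n-m$ elements drawn from a complement inside a finite-index subgroup, using Marshall Hall's theorem together with the Nielsen--Schreier rank formula. First I would record a consequence of the hypothesis: every finite-index subgroup $K \leq F$ satisfies $\operatorname{rank}(K) = 1 + [F:K]\,(n-1) \geq n$ (this is the Euler characteristic computation already used in Lemma~\ref{topgraph}), so a subgroup of rank $m < n$ must have infinite index. This observation also clarifies why a naive abelianisation argument is doomed: the image of $H$ in $F/[F,F] \cong \mathbb{Z}^n$ may have rank strictly below $m$ (for instance $H = \langle [x_1,x_2] \rangle$ has rank $1$ but trivial image), so one cannot simply append standard generators and count ranks abelianly.

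The main step is to realise $H$ as a free factor. By Marshall Hall's theorem, $H$ is a free factor of some finite-index subgroup $K$ of $F$, say $K = H * L$ with $L$ free. By the rank formula above $\operatorname{rank}(K) \geq n$, and since $\operatorname{rank}(K) = \operatorname{rank}(H) + \operatorname{rank}(L) = m + \operatorname{rank}(L)$, we obtain $\operatorname{rank}(L) \geq n-m \geq 1$. I would then fix a basis of $L$ and let $c_1, \dots, c_{n-m}$ be $n-m$ of its members. Writing $L = \langle c_1, \dots, c_{n-m}\rangle * L'$ gives $K = H * \langle c_1, \dots, c_{n-m}\rangle * L'$, so that $\langle H, c_1, \dots, c_{n-m}\rangle = H * \langle c_1, \dots, c_{n-m}\rangle$ is a free factor of $K$ of rank $m + (n-m) = n$, as required.

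The hard part is really the input from Marshall Hall's theorem: it is what lets us pass from the bare rank-$m$ subgroup $H$ to a free splitting in which $H$ is a visible free factor, after which everything reduces to a rank count. If one prefers to avoid citing Marshall Hall, an alternative is an inductive, graph-theoretic route: it suffices to add the $c_i$ one at a time, since $\langle K, c \rangle$ is generated by a basis of $K$ together with $c$ and hence has rank at most $\operatorname{rank}(K)+1$, so $n-m$ well-chosen elements can raise the rank from $m$ to exactly $n$. For a single step, given a finitely generated $K$ of rank $<n$ (hence of infinite index, with finite core graph $C_K$ strictly smaller than the infinite covering graph $X_K$), one locates a vertex $v$ of $C_K$ missing an edge-label $a$ and takes $c$ to be a reduced loop at the basepoint that travels to $v$, crosses the missing edge, and returns; a Stallings-folding and Euler-characteristic bookkeeping then shows the folded graph of $C_K$ together with the $c$-loop has Betti number exactly one larger. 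The delicate point in this alternative is controlling the folding so that the new edges contribute a net $+1$ and no unexpected identifications collapse the construction; a careful choice of $v$, $a$ and a reduced $c$ handles this.
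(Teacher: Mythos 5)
Your proof is correct, but it takes a genuinely different route from the paper's. The paper argues by contradiction via a containment result cited from Lyndon--Schupp (Proposition 3.15, Ch.~I): if $H$ were contained in no subgroup of infinite rank, that proposition would force $H$ to have finite index, and then the index formula $|F:H|(\mathrm{rk}(F)-1)=\mathrm{rk}(H)-1$ would give $\mathrm{rk}(H)\geq n$, contradicting $m<n$; hence $H$ lies in a subgroup of infinite rank, and the construction of the $c_i$ (adjoining $n-m$ basis elements of that subgroup not involved in writing a generating set of $H$) is left implicit. You instead invoke Marshall Hall's theorem to exhibit $H$ as a free factor $K=H*L$ of a finite-index subgroup $K$, apply the same Nielsen--Schreier formula to $K$ to get $\mathrm{rk}(L)\geq n-m$, and take the $c_i$ from a basis of $L$, so that $\langle H,c_1,\dots,c_{n-m}\rangle=H*\langle c_1,\dots,c_{n-m}\rangle$ has rank exactly $n$. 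Both arguments pivot on the index-rank formula and end with the same free-factor rank count, but the key external inputs differ: a Greenberg-type statement obtained by Nielsen's method in the paper, versus M.~Hall's theorem in your proof. Your version buys two things: the final step is fully explicit (the paper never spells out why containment in an infinite-rank subgroup suffices), and it is effective, since the finite-index overgroup in Hall's theorem can be computed from the Stallings graph of $H$, which suits the algorithmic spirit of the paper; the cost is that Hall's theorem is a heavier tool than the paper's citation. Your side remarks (the abelianisation caveat and the sketched folding alternative) are sound but not needed for the main argument.
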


\proof 

It is sufficient to show that $H$ is contained in a subgroup of infinite rank. If it is not the case, by \cite[Proposition 3.15, Ch I]{LyndonSchupp77}, $H$ has finite index. But in that case, since $|F:H|(rk(F)-1)=(rk(H)-1)$,   we have $rk(H) \geq n$. \qed

\section{The main result}

\begin{thm} \label{thm} Let $F$ be a free group of rank $n \geq 2$. Then there is an algorithm which decides, given   $u, v \in F$, whether there exists a monomorphism $f : F \rightarrow F$ such that $f(u)=v$. 
\end{thm}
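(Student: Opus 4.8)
The plan is to reduce the existence of a monomorphism $f$ with $f(u)=v$ to a finite search, by exploiting the fact that the image $f(F)$ is a free subgroup of rank $n$ together with the length estimates furnished by Nielsen-reduced bases. First I would dispose of the trivial cases: since $f$ is injective, $f(u)=v$ forces $u=1$ if and only if $v=1$, so from now on I assume $u$ and $v$ are both nontrivial.

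The central step is a normal-form reduction. A homomorphism $f$ is a monomorphism exactly when $f(x_1),\dots,f(x_n)$ freely generate a subgroup of rank $n$. Applying Nielsen transformations to this generating tuple, I would factor $f=f'\circ\beta$, where $\beta\in\Aut(F)$ and $f'$ is a monomorphism whose image basis $\{f'(x_1),\dots,f'(x_n)\}$ is Nielsen-reduced; setting $u'=\beta(u)$, the condition $f(u)=v$ becomes $f'(u')=v$. Now the Nielsen length estimates pay off: writing $u'$ as a reduced word of length $t=|u'|$ in the generators and applying the second property of Nielsen-reduced subgroups recorded above to $w=f'(u')=v$, one gets $|v|\geq t=|u'|$ and $|v|\geq|f'(x_i)|$ for every generator $x_i$ occurring in $u'$. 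Thus both $u'$ and all the relevant basis elements are bounded in length by $|v|$.

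These bounds turn the problem into a finite search. Using Whitehead's algorithm I would enumerate the finite set of elements $u'$ lying in the $\Aut(F)$-orbit of $u$ with $|u'|\leq|v|$. For each such $u'$, I would search the finitely many Nielsen-reduced tuples $(b_i)_{i\in\operatorname{supp}(u')}$ of words of length $\leq|v|$ for which $u'(b_1,\dots,b_n)=v$, checking via Stallings foldings that the chosen elements indeed freely generate a subgroup whose rank equals the number $k$ of generators occurring in $u'$. If such a tuple is found, Lemma \ref{lem1} lets me adjoin elements $c_1,\dots,c_{n-k}$ so that $\{b_i\}\cup\{c_j\}$ is a basis of a rank-$n$ subgroup; the resulting $f'$, precomposed with $\beta$, is the desired monomorphism. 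Finiteness of the whole search is guaranteed because only finitely many words of length $\leq|v|$ exist, while Lemma \ref{topgraph} bounds the finitely many subgroup shapes that can arise. Conversely, the reduction shows that every genuine monomorphism produces data located by this search, so the procedure is correct.

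The main obstacle is the Whitehead step: enumerating the orbit elements of length at most $|v|$ rests on Whitehead's algorithm, whose complexity is not known to be polynomial, which is precisely why the final algorithm is only claimed to be polynomial apart from the Whitehead parts. A secondary point demanding care is verifying that the factorization search for $u'(b)=v$ under the Nielsen-reduced constraint is both correct and genuinely finite; here it is the length bounds coming from the Nielsen properties, rather than the general machinery of Makanin's algorithm, that keep the search bounded and effective.
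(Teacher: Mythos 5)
Your proposal is correct and follows essentially the same route as the paper: both factor a monomorphism as an automorphism composed with an embedding whose image has a Nielsen-reduced basis, use the Nielsen length estimates to bound the $|b_i|$ and the length of the word expressing $v$ by $|v|$, and then combine Whitehead's algorithm with a Stallings-folding rank check over a finite search, invoking Lemma \ref{lem1} for the converse direction. The only difference is organizational: the paper (via Proposition \ref{prop1}) fixes a candidate tuple $(b_1,\dots,b_m)$ first and runs Whitehead as a per-candidate orbit-membership test between $u$ and the word $w$ expressing $v$, whereas you enumerate the $\Aut(F)$-orbit of $u$ up to length $|v|$ once and then verify the equations $u'(b_1,\dots,b_m)=v$ directly; the two searches are equivalent.
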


Theorem \ref{thm} relies on the following key proposition. 

\begin{prop}\label{prop1}
Let $F$ be a free group on $x_1, \dots, x_n$.  Let $u=u(x_1, \cdots, x_n)$ be a reduced word and let $v \in F$. The following properties are equivalent:

\begin{enumerate}
\item[(1)] there is a monomorphism $f : F \rightarrow F$ such that $f(u)=v$; 
 \item[(2)]  there exist $m$,  $1 \leq m \leq n$,  and elements $b_1, \dots, b_m$ in $F$ such that:

\smallskip
$(i)$ the group $H=\<b_1, \cdots, b_m\>$ is free of rank $m$,  $|b_i| \leq |v|$,  $v \in H$, 

$(ii)$ there exists an automorphism $h$ of  $H*\<y_1, \dots,y_{n-m}|\>$ such that  $$h(u(b_1, \dots, b_m, y_1, \dots, y_{n-m}))=v. $$

\end{enumerate}
\end{prop}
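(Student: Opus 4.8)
The plan is to establish the two implications separately. Beyond elementary free-group theory, the only facts I will invoke are that a free group of rank $n$ generated by $n$ elements is freely generated by them, the minimal-basis statement of Corollary \ref{cor}, and the rank-completion statement of Lemma \ref{lem1}. Throughout I assume $u \neq 1$, so that $v = f(u) \neq 1$ whenever $f$ is injective; the case $u = 1$ is degenerate (it forces $v = 1$, where (2) fails since $|b_i| \leq |v| = 0$ is impossible) and should be recorded separately.

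For $(2) \Rightarrow (1)$, I would assemble $f$ out of three maps. Since $H = \langle b_1, \dots, b_m\rangle$ is free of rank $m$, the $b_i$ form a basis of $H$, and $\{b_1, \dots, b_m, y_1, \dots, y_{n-m}\}$ is a basis of the free group $G := H * \langle y_1, \dots, y_{n-m} \rangle$ of rank $n$. Hence the substitution $\psi : F \to G$, $x_i \mapsto b_i$ for $i \leq m$ and $x_{m+j} \mapsto y_j$, is an isomorphism with $\psi(u) = u(b_1, \dots, b_m, y_1, \dots, y_{n-m})$. Next, using Lemma \ref{lem1} I would find $c_1, \dots, c_{n-m} \in F$ so that $\langle H, c_1, \dots, c_{n-m}\rangle$ has rank $n$; being generated by the $n$ elements $b_1, \dots, b_m, c_1, \dots, c_{n-m}$, it is freely generated by them, so the map $\theta : G \to F$ fixing $H$ pointwise and sending $y_j \mapsto c_j$ is a monomorphism (when $m = n$ one takes $\theta$ to be the inclusion $H \hookrightarrow F$). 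Then $f := \theta \circ h \circ \psi$ is a monomorphism of $F$, and since $v \in H$ is fixed by $\theta$ we obtain $f(u) = \theta(h(u(b_1, \dots, b_m, y_1, \dots, y_{n-m}))) = \theta(v) = v$.

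For the converse $(1) \Rightarrow (2)$, let $K = f(F)$, which is free of rank $n$ with basis $w_i := f(x_i)$, and note $v = u(w_1, \dots, w_n) \in K$. The key step is to replace this basis by a minimal one. By Corollary \ref{cor}, $K$ admits a basis $B = \{b_1', \dots, b_n'\}$ such that every $b' \in B$ occurring in the reduced $B$-form of any $w \in K$ satisfies $|w| \geq |b'|$. Applying this to $w = v$, I let $b_1, \dots, b_m$ (after reindexing) be exactly those elements of $B$ that occur in the reduced $B$-form of $v$; then $m \geq 1$ (as $v \neq 1$), $v \in H := \langle b_1, \dots, b_m\rangle$, the subgroup $H$ is a free factor of $K$ and hence free of rank $m$, and $|b_i| \leq |v|$, which is $(i)$. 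For $(ii)$, I would introduce the isomorphism $\phi : H * \langle y_1, \dots, y_{n-m}\rangle \to K$ fixing $H$ and sending $y_j \mapsto b_{m+j}'$, together with the automorphism $\sigma$ of $K$ determined by $\sigma(b_i') = w_i$. Since $\sigma(u(b_1', \dots, b_n')) = u(w_1, \dots, w_n) = v$ and $\phi$ fixes $v \in H$, the automorphism $h := \phi^{-1} \sigma \phi$ of $H * \langle y_1, \dots, y_{n-m}\rangle$ satisfies $h(u(b_1, \dots, b_m, y_1, \dots, y_{n-m})) = \phi^{-1}\sigma(u(b_1', \dots, b_n')) = \phi^{-1}(v) = v$, as required.

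I expect the main obstacle to lie in the converse direction, and within it in the careful bookkeeping across the three free groups $F$, the image $K$, and the abstract free product $H * \langle y_1, \dots, y_{n-m}\rangle$. The conceptual heart is the use of the minimal basis of Corollary \ref{cor}: it is precisely this that forces $|b_i| \leq |v|$ and thereby confines the search in $(i)$ to finitely many candidate tuples $(b_1, \dots, b_m)$ — the feature that will make the eventual decision procedure effective. One should also verify the degenerate situations (namely $v = 1$, and $m = n$ where no $y_j$ appear) and confirm that $H$ is genuinely a free factor of $K$, so that its rank equals $m$ exactly.
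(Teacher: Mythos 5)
Your proof is correct and takes essentially the same route as the paper's: Corollary \ref{cor} applied to $f(F)$ to extract a minimal basis and the sub-basis carrying $v$ for $(1)\Rightarrow(2)$, and Lemma \ref{lem1} to complete $H$ to a rank-$n$ subgroup for $(2)\Rightarrow(1)$, with the automorphism transported between isomorphic free groups by conjugation (which you make explicit where the paper leaves it implicit). Your observation about the degenerate case $u=v=1$, where (1) holds but (2) literally fails, is a genuine point of care that the paper glosses over, but it does not change the substance of the argument.
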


\proof  $(1)\Rightarrow (2)$.  Let  $f :F \rightarrow F$  be a monomorphism such that  $f(u)=v$.  By Corollary \ref{cor}, the subgroup $f(F)$ has a basis  $B=\{b_1, \dots, b_n\}$ such that for any reduced nontrivial word $w$ on $B$  one has $|w| \geq |b|$ for any $b \in B$ which appears in the reduced form of $w$. 

Since $v \in f(F)$, $v$ can be written on $B$, and thus there exists, relabeling the $b_i's$ if necessary, $1 \leq m \leq n$ such that $v \in H=\<b_1, \dots, b_m\>$ and $|b_i| \leq |v|$ for $1 \leq i \leq m$. We have 
$$
f(u(x_1, \cdots, x_n))=u(f(x_1), \dots, f(x_n))=v.
$$

Since $\{f(x_1), \dots, f(x_n)\}$ is a basis of $f(F)$,  by defining $h :f(F) \rightarrow f(F)$, by 
$$
h(b_i)=f(x_i), 
$$
$h$ is an automorphism of $f(F)$ and 
$$
h(u(b_1, \cdots, b_n))=u(h(b_1), \dots, h(b_n))=u(f(x_1), \dots, f(x_n))=v.
$$
Thus there exists an automorphism $h$ of $f(F)$ such that $h(u(b_1, \cdots, b_n))=v$.   Since $H*\<y_1, \dots,y_{n-m}|\>$ is isomorphic to $f(F)$ and $v \in H$,  the same conclusion apply   also  for $H*\<y_1, \dots,y_{n-m}|\>$; that is there exists an automorphism $h$ of $H*\<y_1, \dots,y_{n-m}|\>$ such that $h(u(b_1, \dots, b_m, y_1, \dots, y_{n-m}))=v$.

$(2)\Rightarrow (1)$. Suppose first that $m=n$. By defining $f : F \rightarrow F$ as
$$
f(x_i)=h(b_i),
$$
we have 
$$
f(u(x_1, \dots, x_n))=u(h(b_1), \dots, u(b_n))=v, 
$$
and thus $f$ is a monomorphism such that $f(u)=v$. 

Suppose now that $m<n$. By Lemma \ref{lem1},  there exist $c_1, \dots c_{n-m} \in F$ such that the subgroup  $K=\<b_1, \dots, b_m, c_1, \dots, c_{n-m}\>$ is free of rank $n$. 

Therefore there exists an automorphism $h$ 
of $K$ such that
$$h(u(b_1, \dots, b_m, c_1, \dots, c_{n-m}))=v.$$

By defining  $f : F \rightarrow F$ as 
$$
f(x_i)=h(b_i)   \hbox{ for }  1 \leq i \leq m, \quad f(x_{m+j})=h(c_j), \hbox{ for }  1 \leq j \leq n-m, 
$$
we have 
$$
f(u(x_1, \dots, x_n))=v, 
$$
and thus $f$ is a monomorphism such that $f(u)=v$.  \qed

\bigskip
The following result is well-known, and it is shown in \cite{Touikan} that the mentioned algorithm can be performed in time less than $O(n\log(n))$, where $n$ is the sum of the lengths of the words in $U$.

\begin{prop} \label{prop2}Let $F$ be a free group. Then there is an algorithm which, given a finite subset $U$ of $F$ and an element $v$ of $F$, gives the rank of $\<U\>$ and decides if $v \in \<U\>$ and if so it gives a word $w$ on $U$ such that $v=w$.
\end{prop}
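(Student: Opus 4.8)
The plan is to build, via Stallings foldings, the folded subgroup graph of $H=\langle U\rangle$ and to read off all three pieces of data from it, using a Nielsen-reduced basis to produce the witness word. First I would form the ``bouquet'' $\mathcal B$: a graph with a single basepoint $*$ from which, for each $u_i\in U$, we attach a loop subdivided into $|u_i|$ edges spelling $u_i$ over the alphabet $A^{\pm 1}$. Then I would repeatedly apply elementary folds, that is, identify any two edges sharing an endpoint and carrying the same label, until no fold is possible. Each fold strictly decreases the number of edges, so the process terminates in a finite, folded (deterministic, $A$-labelled) graph $\Gamma$; this is exactly the core graph $C_H$, and its fundamental group based at $*$ is $H$.

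The rank and the membership test are then immediate. For the rank I would apply the Euler characteristic formula exactly as in Lemma \ref{topgraph}, namely $\mathrm{rk}(H)=|E(\Gamma)|-|V(\Gamma)|+1$. For membership, since $\Gamma$ is folded there is at most one path from $*$ reading any given reduced word; so I would reduce $v$ and attempt to trace it as a path from $*$, concluding that $v\in H$ precisely when the trace succeeds and returns to $*$. Both steps are linear in the size of $\Gamma$, consistent with the bound of order $O(n\log n)$ cited from \cite{Touikan}.

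The delicate point is producing a word $w$ on $U$ with $v=w$, because reading $\Gamma$ most naturally expresses elements in a computed spanning-tree basis rather than in the original generators. Here I would instead Nielsen-reduce the tuple $U$ to a Nielsen-reduced basis $N=\{n_1,\dots,n_r\}$ of $H$, while tracking provenance: each elementary Nielsen move is recorded so that every $n_l$ is stored as an explicit word $\Phi_l(u_1,\dots,u_k)$ in the original generators. Since $N$ is a basis, $r=\mathrm{rk}(H)$, which cross-checks the graph computation. Given $v\in H$, the defining properties of a Nielsen-reduced set, in particular the length estimates quoted just before Corollary \ref{cor}, guarantee that in any reduced expression $v=n_{i_1}^{\epsilon_1}\cdots n_{i_t}^{\epsilon_t}$ each factor leaves an uncancelled ``signature'' inside the reduced form of $v$; this yields a greedy, deterministic rewriting that peels the factors off $v$ from the left. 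Reading off the resulting sequence and substituting $n_l=\Phi_l(U)$ then gives $v$ as a word on $U$.

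I expect this last witness step to be the main obstacle. Deciding membership and computing the rank fall out of either the folded graph or the Nielsen-reduced basis essentially for free, whereas recovering an expression in the \emph{original} set $U$, rather than in a freshly computed basis, forces the extra bookkeeping above: tracking the Nielsen moves and justifying that the greedy procedure terminates with exactly the factors of $v$. Some care is also needed to confirm that the provenance words $\Phi_l$ and the total rewriting do not blow up beyond the claimed complexity, which is precisely where the refined analysis of \cite{Touikan} would be invoked.
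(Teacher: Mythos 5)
Your proposal is correct, but note that the paper does not actually prove this proposition at all: it is stated as a well-known result, with \cite{Touikan} cited only for the complexity bound $O(n\log n)$. What you have written is essentially the standard proof that the citation points to, and it holds up. Folding the subdivided bouquet does terminate in the based core graph $C_H$ (every non-basepoint vertex has degree at least $2$, since each edge lies on the image of a loop spelling a reduced word), the Euler characteristic gives the rank, and tracing the reduced form of $v$ through the folded graph decides membership. Your diagnosis of where the real work lies is also accurate: producing the witness word on the \emph{original} set $U$ is the only non-immediate part, and your solution --- Nielsen-reduce $U$ while recording each elementary Nielsen transformation so that every basis element carries a provenance word $\Phi_l(u_1,\dots,u_k)$, then express $v$ in the Nielsen basis and substitute --- is the classical route. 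Two small caveats: first, the ``greedy peeling'' of factors of $v$ needs the precise uncancelled-middle property of Nielsen-reduced sets (each $n\in N^{\pm1}$ decomposes as $p\,c\,q^{-1}$ with the middle $c$ surviving in any reduced product, \cite[Ch.\ I, Prop.\ 2.13]{LyndonSchupp77}) to rule out ambiguity between candidate first factors; this is standard but should be invoked explicitly rather than waved at. Second, classical Nielsen reduction with provenance tracking is not itself $O(n\log n)$; Touikan's bound is achieved by a careful implementation of the folding process, so your complexity remark should be attributed entirely to \cite{Touikan} rather than presented as a property of the Nielsen bookkeeping. An alternative for the witness step, closer in spirit to \cite{Touikan}, is to record the folds themselves: the composite map from the bouquet to $C_H$ is $\pi_1$-surjective, and a loop reading $v$ can be pulled back through the recorded folds to a word in $U$; both mechanisms are legitimate, and yours has the advantage of cross-checking the rank against $|N|$.
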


\bigskip
\noindent \textbf{Proof of Theorem \ref{thm}}

The algorithm that solves the monomorphism problem is the following. 

\noindent \textbf{Input}: $u(x_1, \dots, x_n)$ and $v$ in $F$ 

\noindent \textbf{Output}: YES or NO (there exists a monomorphism sending $u$ to $v$ or not)
\begin{description}
\item (1) List all sets $U_1, \dots, U_p$ such that $|U_i| \leq n$ and for any $x \in U_i$,  $|x|\leq |v|$.
\item (2) For $i=1$ to $p$ do
\begin{description}
\item (3) Determine the rank of $K_i=\<U_i\>$ by using Proposition \ref{prop2}. 
\item (4) If $rk(K_i) \neq |U_i|$ then go to $i+1$ else determine if $v \in K_i$.
\item (5) If $v \not \in K_i$ then go to $i+1$ else find a word on $U_i$ such that $v=w$.
\item (6) Let $m=|U_i|$. 
\item (7) By using Whitehead's algorithm, determine if there exists an automorphism  $h$ of  $K*\<y_1, \dots,y_{n-m}|\>$ such that  $$h(u(b_1, \dots, b_m, y_1, \dots, y_{n-m}))=w.$$
\item (8) If $h$ exists then by Proposition \ref{prop1} return YES, else go to $i+1$. 
\end{description}
\item (9) If there is no positive output, then return NO. 
\end{description}
\qed
 
\noindent \textbf{Remark}. The algorithm can in fact also produce $f(x_1), \dots, f(x_n)$.

\section{A polynomial time algorithm}

The algorithm in the proof of Theorem \ref{thm} consists of two main parts.

(I) First one finds all tuples $\{b_1, \dots, b_m\}$ such that $v \in \langle b_1, \dots, b_m \rangle$, $|b_i|\leq |v|$ and $\{b_1, \dots, b_m\}$ freely generate $\langle b_1, \dots, b_m \rangle$. Then one finds all words $w \in \langle b_1, \dots, b_m \rangle$ such that $w(b_1, \dots, b_m)=v$.

(II) The second part consists of applying Whitehead's algorithm to the words $u$ and $w$, for each $w$ found in (I). That is, one needs to check whether there is an automorphism $$\alpha: \langle b_1, b_2, \dots, b_m, y_1, \dots y_{n-m}\rangle \rightarrow \langle b_1, b_2, \dots, b_m, y_1, \dots y_{n-m}\rangle$$ such that $\alpha(u(b_1, b_2, \dots, b_m, y_1, \dots y_{n-m}))=w(b_1, \dots, b_m).$

As presented in the proof of Theorem \ref{thm}, line (1) is exponential in the length of $v$. Part (II), Whitehead's algorithm,

 is known to be at most exponential, but conjectured to be polynomial in the lengths of the words \cite{MS, RVW}. 

Here we provide an alternate algorithm for part (I) that can replace lines (1) - (5) and can be performed in time polynomial in $|v|$. Instead of producing all possible tuples of words of bounded length, among which there is a lot of redundancy (in the sense that many of them do not freely generate a subgroup, and many identical subgroups occur), we generate directly candidates for the subgroups that satisfy the properties described by Proposition \ref{prop1} part 2(i). The subgroups that we produce are called \textit{test subgroups} and are defined in 4.4. Roughly speaking, we generate topological graphs of rank up to $n$ whose edges we thereafter label with words in $F$ such that a tuple $\{b_1, \dots, b_m\}$ that labels the generators of the fundamental group of one of these graphs satisfies the following:  $v \in H=\langle b_1, \dots, b_m \rangle$, $|b_i|\leq |v|$, each $b_i$ appears in $v$, and the core graph $C_H$ has a minimal number of edges. Simultaneously we get the set of words $w$ such that $w(b_1, \dots, b_m)=v$.

For the remainder of the paper, we will use the term \textit{arc} in a graph for paths whose interior vertices have degree 2. Each core graph corresponds to a topological graph whose edges are labeled by words in the free group in a way such that no foldings can be performed after the labeling. The fact that $v$ can be read as a loop in such a graph is equivalent to writing $v$ as as the concatenation of the labels on a subset of the edges of the graph. This leads to the following lemma, which counts the number of ways in which $v$ can be achieved as such a concatenation.

\begin{lem} \label{prop3}
Let $v$ be a word in $F_n$ with $|v|=l$ and $l, k\geq 1$. Then the number of morphisms $\phi :F_k \rightarrow F_n$ with the properties:
\begin{enumerate}
\item[(a)] there exists a word $w \in F_k$ such that $\phi(w)=v$, and
 \item[(b)] no cancellation occurs between the images $\{ \phi(x_i) \}$ when forming $v$, 

\end{enumerate}
 is less than $p(l)$, where $p(l)$ is a polynomial of degree $2k$. Here $x_1, \dots, x_k$ denote the generators of $F_k$.
\end{lem}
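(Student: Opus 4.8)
The plan is to show that conditions (a) and (b) force every generator image $\phi(x_i)$ to be a contiguous subword of $v$ or of $v^{-1}$, and then to bound the number of morphisms by counting tuples of such subwords. The whole point of hypothesis (b) is to convert an algebraic identity into a purely combinatorial decomposition of the word $v$.

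First I would unpack (a) and (b). Writing $w=x_{i_1}^{\epsilon_1}\cdots x_{i_t}^{\epsilon_t}$ in reduced form in $F_k$, condition (a) gives
$$v=\phi(x_{i_1})^{\epsilon_1}\cdots\phi(x_{i_t})^{\epsilon_t},$$
and condition (b) says that no cancellation occurs at the junctions between consecutive blocks. Hence $v$ is the \emph{literal} concatenation of the reduced words $\phi(x_{i_j})^{\epsilon_j}$, so each block occupies a contiguous segment of $v$. Reading this off, each $\phi(x_i)$ used in $w$ is either a subword of $v$ (when it occurs with exponent $+1$) or the inverse of a subword of $v$, i.e.\ a subword of $v^{-1}$ (when it occurs with exponent $-1$); if a generator occurs with both signs this only restricts it further. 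In the situation relevant to Section~4 every generator does occur in $w$ (mirroring the requirement that each $b_i$ appear in $v$), so every $\phi(x_i)$ is a subword of $v$ or of $v^{-1}$.

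Next I would count. A word $v$ of length $l$ has at most $\tfrac{l(l+1)}{2}$ nonempty contiguous subwords, obtained by choosing a start and an end among its $l$ letters, and therefore at most $l(l+1)$ words arise as a subword of $v$ or of $v^{-1}$. Since a morphism $\phi$ is completely determined by the tuple $(\phi(x_1),\dots,\phi(x_k))$ and each coordinate lies in this set of size at most $l(l+1)$, the number of morphisms satisfying (a) and (b) is at most $(l(l+1))^k$. Taking $p(l)=(l+1)^{2k}$, which is a polynomial in $l$ of degree $2k$ and strictly exceeds $(l(l+1))^k$, yields the stated bound.

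The step to get right is the reduction in the second paragraph: one must check that the no-cancellation hypothesis (b) genuinely pins each image to a contiguous segment of $v^{\pm1}$, rather than to a subword visible only after free reduction — once $v$ is seen as the concatenation $\phi(x_{i_1})^{\epsilon_1}\cdots\phi(x_{i_t})^{\epsilon_t}$ with reduced blocks and no boundary cancellation, this is immediate, and the counting is then routine. The only caveat worth flagging is that a generator not occurring in $w$ has unconstrained image, so the polynomial bound is to be understood for morphisms in which every generator is used, which is exactly the case arising in the algorithm.
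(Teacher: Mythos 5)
Your proof is correct and follows essentially the same route as the paper's: the no-cancellation hypothesis pins each $\phi(x_i)$ to a contiguous subword of $v^{\pm 1}$, and counting subwords by choosing endpoints gives the bound $(l+1)^{2k}$. You are in fact somewhat more careful than the paper, which glosses over both the inverse-subword case (exponent $-1$) and the caveat about generators not occurring in $w$ (the paper only handles the related case $\phi(x_i)=1$), but these are refinements of the same argument rather than a different approach.
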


\begin{proof}

We can determine the $\phi(x_i)$ one at a time. Since there is no cancellation when forming $v$, each of the $\phi(x_i)$ appear as subwords in $v$. Since any word of length $l$ has $l^2$ possible subwords, there are $(l^2)^k$ possible values for the tuples $(\phi(x_1), \dots, \phi(x_k))$. If we also allow for $\phi(x_i)=1$, we get an upper bound of $(l+1)^{2k}$ for the number of morphisms in the lemma. 
\end{proof}

The next two propositions show that, given a topological graph $\Gamma$, it is sufficient to consider a  number that is polynomial in $|v|$, of ways to label $\Gamma$, in order to generate all test subgroups $H$ (see Definition 4.4) with property 2(i) from Proposition \ref{prop1}. (A priory the number of colorings is exponential, since each arc could be labeled by a word of length smaller than $|v|$.) Since the number of topological graphs that we use is given by $\sum_{i\leq n}|Top(i)|$, which is a function of $n$, these results provide the polynomial time algorithm we are interested in.

\begin{defn} Let $H$ be a subgroup of  $F_n$, $v \in H$, and $C_H$ the core graph of $H$.  An arc $c$ of $C_H$ is said to be \textit{visible} (relative to $v$)  if it traversed when we read $v$ along arcs of $C_H$. The other arcs are called \it{invisible}.  

\end{defn}

\begin{prop} \label{invisible}  If condition (2) of Proposition \ref{prop1} is satisfied, then we can choose  $H$ in such a way that any invisible arc,  relative to $v$,  is of length at most $3$. 
\end{prop}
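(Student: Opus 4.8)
The plan is to start from a subgroup $H$ witnessing condition (2) and to modify it by relabelling only its invisible arcs, keeping fixed both the labels on the visible arcs and the underlying topological graph $\Gamma$ attached to $C_H$. Concretely, I would regard $C_H$ as the graph $\Gamma$ with each arc carrying a reduced word, and replace the word on every invisible arc by a reduced word of length at most $3$, chosen so that no folding is created. Writing $H'$ for the fundamental group of the relabelled graph, the task is then to verify that $H'$ again satisfies all of (2), while now having only short invisible arcs.

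The element $v$ reads as a closed loop in $C_H$ which, by the definition of visibility, traverses only visible arcs. Since those labels are untouched, the very same loop reads $v$ in the relabelled graph, so $v \in H'$. Fix a spanning tree of $\Gamma$ and the associated basis $b_1,\dots,b_m$ coming from the non-tree arcs; the word $w(x_1,\dots,x_m)$ determined by $v=w(b_1,\dots,b_m)$ is read off purely combinatorially from the way the loop of $v$ crosses the non-tree arcs, and this combinatorial datum is unaffected by relabelling invisible arcs. Hence the expression word $w$ is literally unchanged, and since condition 2(ii) is exactly the assertion that $u$ and $w$ lie in the same $\Aut$-orbit of the ambient free group, it is inherited by $H'$. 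Provided the relabelling creates no folds, $\Gamma$ keeps its topological type, so $H'$ is again free of rank $m$; and because each basis loop now traverses the shortened invisible arcs in place of the old ones, its label is a reduced word whose length is at most the old one, so $|b_i|\le|v|$ is preserved.

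The heart of the argument, and where the bound $3$ enters, is producing these short labels without introducing folds. Consider an invisible arc running from a vertex $p$ to a vertex $q$ of $\Gamma$. A reduced label $\ell$ keeps the graph folded precisely when the first letter of $\ell$, read leaving $p$, differs from the labels of all other edge-ends at $p$, and the last letter of $\ell$, read leaving $q$, differs from all other edge-ends at $q$; when $p=q$ these two new edge-ends must in addition differ from each other. Because $C_H$ is folded, every vertex has degree at most $2n$, so at each endpoint the forbidden letters number at most $2n-1$, leaving at least one admissible choice among the $2n$ letters of $A^{\pm}$. A length-one or length-two label can fail to reconcile the two endpoint constraints together with reducedness, but a length-three label $\ell=a_1a_2a_3$ always succeeds: one fixes $a_1$ admissible at $p$ and $a_3$ so that $a_3^{-1}$ is admissible at $q$, then chooses the middle letter $a_2$ to avoid the at most two values excluded by reducedness, which is possible since $2n\ge 4$. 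The same count, augmented by the extra distinctness requirement, settles the loop case $p=q$, and when several invisible arcs meet at a common vertex their end-letters are assigned simultaneously, which is feasible since the total degree there is at most $2n$.

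The main obstacle is exactly this local folding analysis: one must make the relabelling globally consistent — all invisible end-letters at a shared vertex distinct from one another and from the visible ones — while keeping each invisible label reduced, and one must check that length $3$ is genuinely sufficient in the worst cases, including invisible arcs that are loops. Once the relabelled graph is known to be a folded core graph of the same rank, everything else — readability of $v$, invariance of the expression word $w$, and hence the persistence of 2(ii) — follows formally.
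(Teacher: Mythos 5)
Your overall construction is a genuinely different route from the paper's, and most of it is sound: you fix the topological graph and the visible labels, relabel the long invisible arcs in place by reduced words of length $3$ chosen so that the graph stays folded, and transfer condition (2). The paper instead takes $H$ satisfying (2) with $C_H$ having a minimal number of edges, replaces a single invisible arc $e_1e_2\cdots e_t$ ($t\geq 4$) by $e_1\alpha e_t$ — keeping the two extremal edges, which makes the folding analysis trivial — and derives a contradiction with minimality; and it transfers condition 2(ii) not by spanning-tree combinatorics but by labelling the arc with a new letter $x$, writing $v=w(g_1,\dots,g_m)$ in $F*\langle x\rangle$, and specializing $x$ to the old and to the new label. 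Your endpoint letter-count (at most $2n-1$ forbidden letters at each endpoint, a middle letter to restore reducedness) is correct and explains the constant $3$ equally well.

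There is, however, a genuine gap: condition 2(i) for $H'$ is never actually verified. The hypothesis gives the bound $|b_i|\le|v|$ only for the particular basis $b_1,\dots,b_m$ supplied by condition (2), and that basis need not be the one associated to your chosen spanning tree; so your sentence ``its label is a reduced word whose length is at most the old one, so $|b_i|\le|v|$ is preserved'' proves nothing, since for the spanning-tree basis there may be no bound to preserve. Concretely, let $C_H$ be a theta-graph whose three arcs carry suitable reduced words $P_1,P_2,P_3$ of lengths $10,10,1$, and let $v=P_1P_3^{-1}$, of length $11$: the basis $\{P_1P_3^{-1},P_2P_3^{-1}\}$ satisfies the bound, but the spanning-tree basis for the tree consisting of the $P_1$-arc contains $P_2P_1^{-1}$, of length $20>|v|$. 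Symmetrically, your $w$-invariance argument is proved only for the spanning-tree basis, so the two halves of condition (2) end up being checked against different bases and are never reconciled. The fix is available inside your framework: since the relabelling is the identity on the underlying graph, it induces an isomorphism $\sigma:H\to H'$; your combinatorial argument shows $\sigma(v)=v$; and $\sigma$ does not increase the length of any element of $H$, because it rewrites each immersed loop with arc labels that are no longer (provided you relabel only the invisible arcs of length greater than $3$). Then $\sigma(b_1),\dots,\sigma(b_m)$ is a basis of $H'$ with $|\sigma(b_i)|\le|b_i|\le|v|$, and $h'=\sigma h\sigma^{-1}$ witnesses 2(ii) because $\sigma(v)=v$. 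This bookkeeping is exactly what the paper's proof does directly, by defining $b_i'$ as the rewriting of the hypothesis basis elements $b_i$ and proving $v=w(b_1,\dots,b_m)=w(b_1',\dots,b_m')$ for one and the same word $w$.
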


\begin{proof}

Let  $H=\<b_1, \dots, b_m\>$ satisfying (2) of Proposition \ref{prop1} such that $C_H$ has a minimal number of edges.  We claim that any invisible arc, relative to $v$,  in $C_H$ has length at most $3$.   Suppose by contradiction that there exists an invisible arc $c$  of length greater than 3. To obtain the desired contradiction, we will show that there is a subgroup $H'$ that meets condition (2)  of Proposition \ref{prop1} and such that  $H'$  has fewer edges  than $H$.

Write  $c=e_1e_2 \cdots e_t$ with $t \geq 4$ (where $e_i$ denotes an edge).  Replace $c$ by $e_1\alpha e_t$,  where $\alpha$ is a new edge. We claim that there is a labeling of $\alpha$ in such a way that the obtained graph is reduced.  Let $a$ (resp. $b$)  to be the label of $e_1$ (resp. $e_t$).  If $a \neq b^{-1}$ then we  label  $\alpha$ by  $a$.  If $a=b^{-1}$ then we pick  $a' \in A$ with $a' \neq   b, b^{-1}$ and we label $\alpha$ by $a'$. Hence we get a reduced graph $\Gamma'$ as claimed.  

Let $H'$ to be the subgroup of $F$ whose core graph is $\Gamma'$.  Using the Euler characteristic formula,  a simple count shows that $H'$ has the same rank as $H$. Let $b_i'$ to be  the element  of $H'$ obtained by replacing each occurence of $c$ in $b_i$  by  its new labelling in $\Gamma'$.  Clearly $|b_i'|   \leq v$ and $H'=\<b_1', \dots, b_{m}'\>$. Since $c$ is an invisible arc,  $v \in H'$. 

To get a contradiction and to finish the  proof, it is enough to show that  there exists a word $w(x_1, \cdots, x_m)$ such that  $v=w(b_1, \cdots, b_m)=w(b_1', \cdots, b_m')$. 

Let $x$ be a new variable and let $L=F*\<x|\>$.  Let $\Gamma$ be the $A\cup \{x\}$-labeled graph obtained by lalbeling $c$ by $x$ (i.e., deleting all interior vertices of $c$ and we label $c$ by $x$).  Clearly  $\Gamma$ is reduced and  a simple count shows that the rank of $\Gamma$ is equal to the rank of $C_H$.  Let $G$ be the group whose core graph is $\Gamma$.  Let $g_i$ to be  the element  of $G$ obtained by replacing each occurence of $c$ in $b_i$  by  its new labelling in $\Gamma$. Proceeding as above, we conclude that $G=\<g_1, \cdots, g_m\>$ and $v \in G$.  Let $w(x_1, \cdots, x_n)$ such that $v=w(g_1, \cdots, g_m)$. 

Let $f$ (resp. $f'$)  to be the   homomorphism from $L$ to $F$ which fixes every element of $F$ and which sends $x$ to the label of $c$ in $C_H$ (resp. $C_{H'}$).  We have    $v=f(v)= w(f(g_1), \dots, f(g_m))$ and $v=f'(v)= w(f'(g_1), \dots, f'(g_m))$.   But $f(g_i)=b_i$ and $f'(g_i)=b_i'$. Hence, we obtain the desired conclusion. 
\end{proof}

\begin{defn} A subgroup $H$ of $F_n$  such that $v\in H$ and $H$ satisfies the condition in Proposition \ref{invisible} is called a \textit{test subgroup} (relative to $v$).  

\end{defn}

\begin{prop} \label{prop4}
The number of test subgroups $H$, relative to $v$, of rank $g$ corresponding to a fixed topological graph $\Gamma$ is bounded above by $A(g)p(l)n^{6g-3}$, where $l=|v|$, $A(g)$ is a function that depends on $g$ only and $p(l)$ is a polynomial of degree $2g$.
\end{prop}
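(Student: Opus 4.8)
The plan is to bound the number of admissible labelings of the fixed topological graph $\Gamma$ by treating its edges (equivalently, the arcs of the associated core graph) in two groups: those traversed by $v$ (visible) and those that are not (invisible). First I would fix a spanning tree of $\Gamma$; its $g$ complementary edges $e_1, \dots, e_g$ carry the generators $b_1, \dots, b_g$ of the fundamental group. The number of auxiliary combinatorial choices — the spanning tree, which arcs are visible or invisible, the orientations, and the assignment of generators to non-tree edges — is bounded by a function of $g$ alone, since $\Gamma$ is fixed and $|E(\Gamma)| \le 3g-1$ by Lemma \ref{topgraph}; all of these will be absorbed into $A(g)$. The folding condition will simply be dropped: since every test subgroup yields an admissible labeling, counting the larger set of all labelings produces a valid upper bound.

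For the invisible arcs I would invoke Proposition \ref{invisible}: each invisible arc may be taken to have length at most $3$, hence is labeled by a reduced word of length at most $3$ over $A^{\pm1}$, of which there are at most $c\,n^3$ for an absolute constant $c$. The crucial point is to cap the number of invisible arcs at $2g-1$. This follows because each generator $b_i$ occurs in the expression of $v$, so the reduced path spelling $v$ crosses every non-tree edge $e_i$; thus all $g$ non-tree arcs are visible, and the invisible arcs lie among the remaining $|E(\Gamma)|-g \le (3g-1)-g = 2g-1$ arcs. Multiplying, the invisible labels contribute at most $(c\,n^3)^{2g-1} = c^{2g-1}\,n^{6g-3}$, and $c^{2g-1}$ is swept into $A(g)$.

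For the visible arcs I would argue as in the proof of Lemma \ref{prop3}. Since the core graph is folded and $v$ is reduced, $v$ is read by traversing whole arcs, so $v$ is the concatenation, without cancellation, of the labels of the visible arcs in the order they are encountered. In particular the label of each non-tree arc $e_i$ is a subword of $v$, determined by its start and end positions, giving at most $(l+1)^2$ choices per generator and at most $(l+1)^{2g}$ choices in total — a polynomial $p(l)$ of degree $2g$. The remaining visible arcs lie in the spanning tree, and their labels are \emph{not} free: each tree arc fills a gap between two consecutive non-tree traversals (or between $*$ and the first non-tree traversal) and is therefore forced, as a subword of $v$, once the positions of the adjacent non-tree arcs are fixed. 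Imposing consistency when an arc is traversed more than once or in both orientations, and requiring the path to close up at $*$, can only decrease the count. Combining the three factors yields the bound $A(g)\,p(l)\,n^{6g-3}$.

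The main obstacle is the degree-$2g$ estimate for the visible part: the naive bound, assigning an independent subword of $v$ to each of the up to $3g-1$ visible arcs, only gives degree $2(3g-1)$. The improvement rests on the observation that the spanning-tree arcs contribute no free parameters, being determined by the gaps the generator arcs leave in $v$, so that only the $g$ generator arcs are genuinely free. Making this determination precise — handling arcs traversed repeatedly or in both directions, together with the initial and final tree segments, and checking that every test subgroup is indeed counted so that the estimate is a legitimate upper bound — is the delicate step. It is the property that each $b_i$ appears in $v$, forcing all non-tree arcs to be visible and thereby capping the invisible arcs at $2g-1$, that couples the two estimates and produces the exponent $6g-3 = 3(2g-1)$.
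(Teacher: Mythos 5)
Your overall architecture matches the paper's: split the arcs of $\Gamma$ into visible and invisible ones, note that each generator must appear in the expression of $v$ so that all invisible arcs lie among at most $(3g-1)-g=2g-1$ arcs, and label each invisible arc by one of $O(n^3)$ short words, giving the factor $n^{6g-3}$. Where you part ways with the paper is the visible part, and you are in fact attempting something strictly stronger than what the paper proves. The paper applies Lemma \ref{prop3} with $k=t(g)=|E(\Gamma)|\leq 3g-1$, i.e.\ it spends an independent subword-of-$v$ parameter on \emph{every} arc, visible tree arcs included; what its proof actually yields is a polynomial of degree $2t(g)\leq 6g-2$ (consistent with the $(l+1)^{6g}$ appearing in the final estimate $(\star)$, though not with the degree $2g$ announced in the statement of the proposition). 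You instead try to establish the degree-$2g$ bound literally, by parametrizing only the $g$ non-tree arcs and claiming the visible tree arcs are then forced.

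That forcing claim is exactly where your proof has a genuine gap. First, your parameter set consists of one position pair per non-tree arc, i.e.\ the location of a single traversal; but the ``gaps'' you invoke are delimited by \emph{all} traversals of non-tree arcs, almost all of which are unrecorded, so the data you count does not tell you where a gap begins or ends. Second, even when a gap subword is known, the gap corresponds to a geodesic in the spanning tree that may cross several tree arcs and branch vertices, and the gap word only determines the \emph{product} of their labels, not the individual labels: the split at a branch vertex is pinned down only by the folding condition (a longest-common-prefix argument at that vertex), and you explicitly announce that ``the folding condition will simply be dropped.'' Without foldedness the forcing statement is false: take the tree $t_0\colon *\to p$, $t_1\colon p\to q_1$, $t_2\colon p\to q_2$ with loops $e_1,e_2$ at $q_1,q_2$; the reading of $v$ only determines the words $T_0T_1$ and $T_1^{-1}T_2$, and every choice of the break point $|T_1|$ compatible with a common prefix gives an unfolded labeling with identical parameters, up to $|v|$ of them. (These all generate the same subgroup, since $b_1=(T_0T_1)X_1(T_0T_1)^{-1}$ and $b_2=(T_0T_2)X_2(T_0T_2)^{-1}$ depend only on the products, but your argument never descends from labelings to subgroups.) Finally, the remark that ``imposing consistency can only decrease the count'' points the wrong way: to get an upper bound from your parametrization you need \emph{injectivity} --- each parameter tuple must determine at most one test subgroup --- and that is precisely the unproven step, not a harmless loss. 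The degree-$2g$ statement may well be provable along your lines by keeping foldedness and running a deterministic reconstruction of the reading of $v$ between the recorded anchor positions, but as written the argument does not establish the bound; the paper avoids the difficulty entirely by paying $2t(g)\leq 6g-2$ parameters instead of $2g$.
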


\begin{proof}
Since $\Gamma$ has rank $g$, Lemma \ref{topgraph} implies that the number of arcs of $\Gamma$ is $|E(\Gamma)|\leq 3g-1$. As the total number of arcs in $\Gamma$ depends on $g$, we will denote $|E(\Gamma)|$ also by $t(g)$, where `t' stands for \textit{total number of arcs}. Fix a numbering for the arcs of $\Gamma$, say $e_1, \dots, e_{t(g)}$.

Now let $k=t(g)$. We start by choosing a morphism $\phi:F_k \rightarrow F_n$ with the properties given in Lemma \ref{prop3}. Let $\phi(x_i)=X_i$ for all $1\leq i \leq k$. We `color'  or `label' $e_1, \dots, e_{t(g)}$ with $X_1$ up to $X_k$ and $B$, where $B$ stands for `blank.' Let  $L(\Gamma)=\{X_1, \dots, X_k, B\}$. In case some of the $X_i$ are the empty word, we remove those $X_i$ from $L(\Gamma)$ and only use the remaining labels. Each coloring of the graph should correspond to a surjection from the set $E(\Gamma)=\{e_1, \dots, e_{t(g)}\}$ to $L(\Gamma)=\{X_1, \dots, X_k, B\}$, with the additional requirement that $L(\Gamma)$ does not contain any $X_i=1$, and that if $X_i\neq X_j$ for all $1\leq i\neq j \leq k$, then $L(\Gamma)$ does not contain $B$. Notice that for each morphism $\phi$ the set $L(\Gamma)$ will have at most $k=t(g)$ elements.

Thus the number of valid labelings of $\Gamma$ with $\phi$ is bounded above by the number of surjections from the set $E(\Gamma)$ to $L(\Gamma)$. Let the number of these surjections be $A(g)$. If we let $|L(\Gamma)|=m$, then $A(g)$ is the second Sterling number $S(k,m)$. (If $|E(\Gamma)|=|L(\Gamma)|$, then $A(g)=k!$.) Since $k$ and $m$ depend on $g$, $A(g)$ is a function that depends on $g$ only.
 
Finally, we need to color the `blank' or invisible arcs, that is, the edges labeled by $B$ so far and which by Proposition \ref{invisible} can be labeled with at most three letters without loss of generality.  Let $b(g)$ be the number of the invisible arcs. 

Since $v$ has to contain each generator of the group, we can assume that it passes through at least $g$ arcs of $\Gamma$. That is, the number of invisible arcs should be at most $t(g)-g\leq 3g-1-g=2g-1$, and all the colorings which result in having more that $2g-1$ invisible arcs can be discarded. Then the labeling of these arcs can be done in $(2n(2n-1)(2n-1))^{b(g)} \leq 8n^{3(2g-1)}$ ways, where $n$ is the rank of the free group. 

Since the number of morphisms in Lemma \ref{prop3} is bounded above by $p(l) \leq (l+1)^{2k}$, we get that the number of test subgroups $H$ of rank $g$ is less than $A(g)p(l)n^{3(2g-1)}$.
\end{proof}

\subsection{Generating test subgroups}
Part (I) of the algorithm now follows the outline:

\noindent \textbf{Input}:  word $v$ of length $l$, free group of rank $n$. 

\noindent \textbf{Output}: word $w$ such that $w(h_1, \dots, h_m)=v$ for some $h_i \in F$ such that $\langle h_1, \dots, h_m \rangle$ is a test subgroup. 
\begin{description}
\item (1) For $g=1$ to $n$ generate the topological graphs of rank $g$. 
\item (2) For $\Gamma \in Top(g)$ do
\begin{description}
\item (3) Let $k:=|E(\Gamma)|$ Generate basis $h_1, \dots, h_g$. 
\item (4) Generate morphisms $F_k \rightarrow F_n$ as in Lemma \ref{prop3}
\item (5) For each morphism above and for each basis: 
\item (6) Let $X_j=\phi(x_j)$ and label arcs of $\Gamma$ with $X_1, \dots X_k, B$ as in the proof of Proposition \ref{prop4}
\item (7) For each labeling check if $v$ can be read as a loop at $\star$ in $\Gamma$
\item (8) If yes, chek if $|h_i| \leq |v|$
\item (9) If yes, then return $w$, where $w$ is the word in $h_1, \dots, h_g$ corresponding to the loop found in (7)
\item (10) Go to next morphism
\item (11) Go to next basis
\end{description}
\item (12) Go to next $\Gamma$
\end{description}

The only lines that involve the length of $v$ are (4), and (7) and (8). Line (4) will be performed $p(l)$ times within the second \textit{for} loop, as shown in Lemma \ref{prop3}. Lines (7) and (8) can be performed in a time almost linear in $l$, as shown in \cite{Touikan}. Analyzing the algorithm above, we see that the number of operations performed is bounded above by the sum: 
\begin{equation*}\tag{$\star$}
\sum_{g=1}^n |Top(g)|A(g)(l+1)^{6g}n^{3(2g-1)} = O(l^{6n})
\end{equation*}
\section{Conclusions}

The algorithm that we provided for the monomorphism problem, with the exception of part two of the Whitehead algorithm, is polynomial in the lengths of the words $u$ and $v$. However, the constants involved in the time complexity in ($\star$) are exponential in the rank of the group $F$, and thus we cannot claim that our algorithm is a practical one. For free groups of small ranks the constants are manageable due to the fact that the number of topological graphs is small, and the degree of the polynomial $p(l)$ is also very small. In particular, if $F$ has rank $2$, it is known that the Whitehead algorithm has polynomial complexity, which leads us to the following corollary.

\begin{cor}
The monomorphism problem in the free group of rank $2$ has a time complexity that is polynomial in the lengths of the words $u$ and $v$. 
\end{cor}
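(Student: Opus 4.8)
The plan is to combine the polynomial-time implementation of Part (I) developed in Section 4 with the polynomial behaviour of Whitehead's algorithm in rank $2$ recalled just before the statement. Recall that the algorithm of Theorem \ref{thm} decomposes into Part (I), which produces the test subgroups $H=\langle b_1,\dots,b_m\rangle$ together with all words $w$ satisfying $w(b_1,\dots,b_m)=v$, and Part (II), which for each such $w$ decides by Whitehead's algorithm whether $u$ and $w$ are automorphically equivalent in the free group $H*\langle y_1,\dots,y_{n-m}\rangle$ of rank $n$.

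First I would specialise the complexity bound $(\star)$ to $n=2$. The sum $\sum_{g=1}^{2}|Top(g)|A(g)(l+1)^{6g}n^{3(2g-1)}$ is dominated by its $g=2$ term and is therefore $O(l^{12})$, where $l=|v|$. In particular, Part (I) terminates in time polynomial in $|v|$ and emits at most polynomially many candidate words $w$.

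Next I would bound the lengths of these words. Each $w$ is read off as a loop in a reduced labelled core graph, so its length as an abstract word in the generators $b_1,\dots,b_m,y_1,\dots,y_{n-m}$ is $O(|v|)$: since the labels satisfy $|b_i|\leq|v|$ with $|b_i|\geq 1$ and no cancellation occurs between consecutive labels when forming $v$, the number of generator occurrences in $w$ is at most $|v|$. Hence for $n=2$ the ambient group of Part (II) is free of rank $2$, and the two inputs to Whitehead's algorithm, namely $u$ and $w$, have lengths $|u|$ and $O(|v|)$ respectively in the rank-$2$ basis.

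Finally, I would invoke the polynomiality of Whitehead's algorithm in rank $2$. Each automorphic-equivalence check then runs in time polynomial in $|u|$ and $|v|$, and since only polynomially many words $w$ are tested, Part (II) as a whole is polynomial in $|u|$ and $|v|$. Adding the costs of Parts (I) and (II) yields an overall running time polynomial in $|u|$ and $|v|$, as claimed. The substantive input is this last step: for general rank the corresponding assertion is only conjectural, since Part (II) is merely known to be at most exponential, so the corollary genuinely rests on the special polynomial complexity of Whitehead's algorithm in rank $2$, and this is the only place where the hypothesis $n=2$ is essential.
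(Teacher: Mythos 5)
Your proposal is correct and follows essentially the same route as the paper: the paper's justification (given in the discussion preceding the corollary) is precisely that Part (I), by the bound $(\star)$ of Section 4, is polynomial in the word lengths, and that in rank $2$ the Whitehead algorithm (Part (II)) is known to be polynomial, so the whole algorithm runs in polynomial time. Your additional details --- specialising $(\star)$ to $n=2$ to get $O(l^{12})$, bounding the number and the lengths of the candidate words $w$, and noting that the ambient group in Part (II) has rank $2$ --- are accurate elaborations of what the paper leaves implicit.
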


The rank $2$ free group is possibly the only one in which the three related decision problems, the endomorphism \cite{LC}, monomorphism and automorphism \cite{MS} problem, can be solved in a time polynomial in the lengths of the words involved.

A consequence of our method  is also the following. \begin{cor}
The multiple monomorphism problem in free groups is solvable. That is, for any $r \geq 1$ and any two tuples of words $(u_1, \dots, u_r)$ and $(v_1,\dots, v_r)$ in $F$, one can decide whether there is an injective homomorphism  $f:F \rightarrow F$ such that $f(u_i)=v_i$ for all $1 \leq i \leq r$.
\end{cor}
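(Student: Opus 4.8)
The plan is to reduce the multiple monomorphism problem to the single-word case by adapting Proposition~\ref{prop1} and the algorithm of Theorem~\ref{thm} to tuples. First I would establish the following analog of Proposition~\ref{prop1}: there is a monomorphism $f:F\to F$ with $f(u_i)=v_i$ for all $i$ if and only if there exist $m$, $1\le m\le n$, and elements $b_1,\dots,b_m\in F$ such that $H=\<b_1,\dots,b_m\>$ is free of rank $m$, each $|b_j|\le\max_i|v_i|$, every $v_i$ lies in $H$, and there is a \emph{single} automorphism $h$ of $H*\<y_1,\dots,y_{n-m}|\>$ with $h(u_i(b_1,\dots,b_m,y_1,\dots,y_{n-m}))=v_i$ for all $1\le i\le r$ simultaneously.

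For the direction $(1)\Rightarrow(2)$, given such an $f$ I would apply Corollary~\ref{cor} to obtain a basis $B=\{b_1,\dots,b_n\}$ of $f(F)$ in which every basis element appearing in the reduced form of a word $w$ on $B$ has length at most $|w|$. Let $b_1,\dots,b_m$ (after relabeling) be those basis elements occurring in the reduced forms of $v_1,\dots,v_r$; each such $b_j$ appears in some $v_i$, so $|b_j|\le|v_i|\le\max_i|v_i|$, and each $v_i$ lies in $H=\<b_1,\dots,b_m\>$. Defining $h(b_i)=f(x_i)$ yields an automorphism of $f(F)$ with $h(u_i(b_1,\dots,b_n))=f(u_i)=v_i$ for all $i$; transferring along the isomorphism $f(F)\cong H*\<y_1,\dots,y_{n-m}|\>$ (legitimate precisely because every $v_i\in H$) produces the required $h$. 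The converse $(2)\Rightarrow(1)$ is verbatim the argument in Proposition~\ref{prop1}: extend $H$ to a rank-$n$ subgroup $K$ via Lemma~\ref{lem1} and pull $h$ back to a monomorphism of $F$.

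With this characterization, the algorithm of Theorem~\ref{thm} goes through with only bookkeeping changes. Step (1) enumerates the finitely many sets $U$ of words of length at most $\max_i|v_i|$; steps (4)--(5) check that $\<U\>$ is free of the correct rank and that every $v_i\in\<U\>$, recording via Proposition~\ref{prop2} words $w_i$ with $w_i(b_1,\dots,b_m)=v_i$; and step (7) is replaced by a query to a multiple Whitehead algorithm. The test-subgroup optimization of Section~4 adapts as well, taking an arc to be \emph{visible} if it is traversed by at least one of the loops $v_1,\dots,v_r$, so that the counting in Proposition~\ref{prop4} is unaffected in kind.

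The main obstacle is step (7): one must decide whether a \emph{single} automorphism of $\<b_1,\dots,b_m,y_1,\dots,y_{n-m}|\>$ carries the tuple $(u_1,\dots,u_r)$ to $(w_1,\dots,w_r)$, which is the simultaneous version of the automorphic equivalence problem rather than the single-word version used in Theorem~\ref{thm}. Whitehead's method does extend to this setting: Whitehead himself treated the automorphic equivalence of finite sets of elements, and an ordered $r$-tuple can be encoded so that peak reduction still applies, whence the question of whether two $r$-tuples lie in the same $\Aut(F)$-orbit is decidable. Invoking this decidability is the crux; once it is granted the remainder of the argument follows the proof of Theorem~\ref{thm} line for line.
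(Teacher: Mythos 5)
Your proposal is correct and follows essentially the same route as the paper: extend Proposition~\ref{prop1} to tuples (with the same Nielsen-basis argument for one direction and Lemma~\ref{lem1} for the other), then run the algorithm of Theorem~\ref{thm} with the final step replaced by the Whitehead algorithm for tuples. You spell out details the paper leaves implicit --- notably the precise length bound $|b_j|\le\max_i|v_i|$ and the explicit reliance on decidability of simultaneous automorphic equivalence of tuples --- but the underlying argument is the same.
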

\begin{proof} 
The proof of Proposition \ref{prop1} easily extends to the tuple case. The existence of a monomorphism $f$ is equivalent to the existence of a subgroup $H=\<b_1, \cdots, b_m\>$, free of rank $m\leq n$,  such that $|b_i| \leq |v_j|$ and $v_j \in H$ (condition 2(i)), and the existence of an automorphism $h$ such that $$h(u_j(b_1, \dots, b_m, y_1, \dots, y_{n-m}))=v_j$$ for all $1\leq i \leq m$, $1 \leq j \leq r$ (condition 2(ii)).

One then follows the outline of the algorithm given in Theorem \ref{thm}. One finds subgroups $H=\<b_1, \cdots, b_m\>$ as above by enumerating tuples (line (1) of the algorithm) and then checking whether they satisfy all the required properties (lines (3), (4) and (5)). Then one writes each $v_i$ as a word $w_i$ in the generators $b_1, \cdots, b_m$ (line (5)) and applies the Whitehead algorithm for tuples in the group  $H*\<y_1, \dots,y_{n-m}|\>$ in order to determine whether there exists an automorphism $h$ such that  $h(u_j(b_1, \dots, b_m, y_1, \dots, y_{n-m}))=v_j$ for all $1 \leq j \leq r$.
\end{proof}

\section*{Acknowledgments}
The first-named author was partially supported by the SNF (Switzerland) through project
number 200020-113199 and by the Marie Curie Reintegration Grant 230889.

\bibliographystyle{plain}

\begin{thebibliography}{1}

\bibitem{LC}
L.~Ciobanu. 
\newblock Polynomial-time complexity for instances of the endomorphism problem in free groups.
\newblock  {\em International Journal of Algebra and Computation}, 17(2):289-328, 2007.


\bibitem{kap-mya-stallingsfoldings}
I.~Kapovich, A. Myasnikov. 
\newblock Stallings Foldings and Subgroups of Free Groups.
\newblock {\em Journal of Algebra}, 248:608--668, 2002.

\bibitem{LyndonSchupp77}
R.~Lyndon and P.~Schupp.
\newblock Combinatorial group theory.
\newblock {\em Springer-Verlag}, 1977.





\bibitem{M}
{G. S. Makanin}.
\newblock Equations in free groups ({R}ussian).
\newblock {\em Izv. Akad. Nauk SSSR Ser. Mat. 46}, 1982.



\bibitem{MS}
A.~G. Myasnikov and V.~Shpilrain.
\newblock Automorphic orbits in free groups.
\newblock {\em Journal of Algebra}, 269(1):18--27, 2003.

\bibitem{ould}
{A. Ould Houcine}.
\newblock  Satisfaction of existential theories in finitely presented groups and some embedding theorems. 
\newblock {\em Annals of Pure and Applied Logic}, 142(1-3):351-365, 2006.


\bibitem{RVW}
A.~Roig, E. ~Ventura, P. ~Weil. 
\newblock On the complexity of the Whitehead minimization problem.
\newblock  {\em International Journal of Algebra and Computation}, 17(8):1611--1634, 2007.

\bibitem{Touikan}
N.W.M.~Touikan.
\newblock A fast algorithm for Stallings' folding process.
\newblock {\em International Journal of Algebra and Computation}, 16(6):1031--1045, 2006.




\bibitem{Whitehead36}
J.~H.~C.~Whitehead.
\newblock On equivalent set of elements in a free group.
\newblock {\em Ann. of Math.}, 37: 782--800, 1936.  



\end{thebibliography}

\bigskip
\noindent Laura CIOBANU, D\'epartement de math\`ematiques,
Universit\'e de Fribourg,
Chemin du Mus\'ee 23,
CH-1700 Fribourg P\'erolles,
Switzerland.\\
\textit{E-mail}: \textrm{laura.ciobanu@unifr.ch}\\
\\
\noindent Abderezak OULD HOUCINE, 

\noindent Universit\'e de Mons-Hainaut,
Institut de Math�matique,
B\^atiment �Le Pentagone�,
Avenue du Champ de Mars 6,
B-7000 Mons,
Belgique. 

\medskip
\noindent Universit\'e de Lyon; Universit\'e Lyon 1;
INSA de Lyon, F-69621;
Ecole Centrale de Lyon;
CNRS, UMR5208, Institut Camille Jordan,
43 blvd du 11 novembre 1918,
F-69622 Villeurbanne-Cedex, France. 
\textit{E-mail}: \textrm{ould@math.univ-lyon1.fr}

\end{document}